\documentclass{amsart}

\usepackage{amsmath}
\usepackage{amsfonts}
\usepackage{amssymb}
\usepackage{color}
\usepackage{graphicx}
\usepackage[bookmarksnumbered,colorlinks, linkcolor=blue, citecolor=red, pagebackref, bookmarks, breaklinks]{hyperref}
\usepackage[a4paper, left=2.5cm, right=2.5cm, top=2.5cm, bottom=2.5cm]{geometry}
\usepackage{float}
\usepackage{listings}
\usepackage{tikz}
\usepackage{pstricks,pst-node}
\usetikzlibrary{arrows.meta, positioning}
\usepackage[all]{xy}
\usepackage{pgfplots}
\pgfplotsset{compat=1.18}
\newtheorem{thm}{Theorem}[section]

\newtheorem{lema}[thm]{Lemma}
\newtheorem{prop}[thm]{Proposition}
\theoremstyle{definition}

\theoremstyle{remark}
\newtheorem{rem}[thm]{Remark}

\numberwithin{equation}{section}
\newcommand{\R}{\mathbb R}

\begin{document}
	\title{Optimal Control Strategies for Epidemic Dynamics: Integrating SIR-SI and Lotka--Volterra Models}
	
	\author{Rocio Balderrama, Ignacio Ceresa Dussel and Constanza Sanchez de la Vega}
	
	\address[Rocio Balderrama]{Instituto de Investigaciones Matemáticas Luis A. Santaló (IMAS--CONICET), Departamento de Matemática, Facultad de Ciencias Exactas y Naturales,
		Universidad de Buenos Aires, Ciudad Universitaria, Buenos Aires, Argentina}
	\address[Ignacio Ceresa Dussel]{Instituto de Investigaciones Matemáticas Luis A. Santaló (IMAS--CONICET), Departamento de Matemática, Facultad de Ciencias Exactas y Naturales,
		Universidad de Buenos Aires, Ciudad Universitaria, Buenos Aires, Argentina}
	
	\address[Constanza Sanchez de la Vega]{Instituto de Cálculo, CONICET,
		Departamento de Matemática, Facultad de Ciencias Exactas y Naturales,
		Universidad de Buenos Aires, Ciudad Universitaria, Buenos Aires, Argentina}
	
	\subjclass[2020]{}
	\keywords{
		Lotka--Volterra systems;
		SIR--SI epidemic models;
		vector-borne diseases;
		biological control;
		predator--prey dynamics;
		optimal control;
		Pontryagin maximum principle;
		nonlinear dynamical systems;
		stability analysis
	}

	\subjclass[2020]{92D30, 34H05, 49J15, 34C60, 37N25}
	
	\begin{abstract}
		In this work we present a mathematical model that integrates the epidemiological dynamics of a vector-borne disease (SIR–SI) with Lotka--Volterra predator–prey ecological interactions. The study analyzes how the presence of natural predators acts as a biological control mechanism to regulate the vector population and, consequently, disease transmission in host. 
		
		We introduce the concept of the ecological reproduction number, a threshold that links the amplitude of predator--prey cycles to disease persistence, showing that natural control depends critically on the ratio between the maximum vector density and the minimum predator density. In scenarios where natural control is insufficient, we formulate an optimal control problem based on the release of predators. Using the Pontryagin Maximum Principle, we characterize the optimal strategy that minimizes the cumulative number of infected individuals and intervention costs, while simultaneously maximizing the susceptible host population at the end of the time horizon. Numerical simulations validate the effectiveness of the model, showing that external intervention mitigates the epidemic peak and stabilizes the system against the natural oscillations of biological populations.
		
	\end{abstract}
	\maketitle

\section{Introduction}
Epidemiological models have been a powerful tool for understanding the spread of infectious diseases in populations, such as measles, smallpox, influenza, and COVID-19 \cite{Balderrama2022, Balderrama2024, Cooper2020,Duncan1997,Eilertson2019,Kusmawati2021,Rahimi2021}. The most common among these models is the susceptible, infected and recovery model or SIR model, firstly introduced by Kermack and McKendrick \cite{KermackMcKendrick1927} and extensively studied by several authors through the last century, see for example \cite{anderson1992, keeling2008}. The model captures the dynamics of disease transmission by following the temporal evolution of the susceptible, infected, and recovered populations, whose interactions are described by a system of ordinary differential equations represented as
\medskip

\begin{center}
	\begin{tikzpicture}[node distance=3cm, every node/.style={draw, circle, minimum size=1cm}]
	\node (S) {$S$};
	\node (I) [right of=S] {$I$};
	\node (R) [right of=I] {$R$};
	\draw[<->] (S) -- (I) node[midway,below,inner sep=1pt,draw=none] (betah) {};
	\draw[->] (I) -- (R); 
\end{tikzpicture}
\end{center}

However, many infectious diseases are not transmitted directly between humans. Diseases such as Malaria, Chikungunya, and Dengue are spread by mosquitoes. To account for this, several authors \cite{aronna2018interval,Esteva1998,hethcote1989} have introduced additional complexity into classical models by considering the interaction between mosquitoes (vectors) and humans (hosts). In particular, the vector population and the host population are modeled through a coupled system, where the vectors follow a susceptible--infectious structure, leading to the so-called SIR-SI model, represented in the following diagram
\begin{center}
	\begin{tikzpicture}[node distance=3cm, every node/.style={draw, circle, minimum size=1cm}]
		\node (Sh) {$S_h$};
		\node (Ih) [right of=Sh] {$I_h$};
		\node (Rh) [right of=Ih] {$R_h$};
		\node (Sv) [below of= Sh] {$S_v$};
		\node (Iv) [below of= Ih] {$I_v$};
		\draw[->] (Sh) -- (Ih) node[midway,below,inner sep=1pt,draw=none] (betah) {};
		\draw[->] (Ih) -- (Rh); 
		\draw[->] (Sv) -- (Iv) node[midway,below,inner sep=1pt,draw=none] (betav) {};
		\draw[->,blue,dashed] (Iv) -- (betah.north);
		\draw[->,blue,dashed] (Ih) -- (betav.north);
	\end{tikzpicture}
\end{center}
Here $S_h,I_h,R_h$ are the susceptible-infected-removed hosts and $S_v,I_v$ are the susceptible-infected vectors.
	
The purpose of an epidemiological mathematical model is not only descriptive; an equally important goal is to inform and guide epidemic control. Specifically, such models aim to influence disease dynamics by devising strategies that minimize the number of infections, curb pathogen transmission, and optimize the allocation of available resources. The mathematical framework that enables this analysis is provided by \emph{optimal control theory}, which focuses on determining optimal trajectories of the system’s state variables through the adjustment of control variables under given constraints \cite{Evans,Hartl1995}. A substantial body of work has applied optimal control techniques to SIR–SI models, exploring strategies such as vaccination, insecticide use, or sterile insect techniques to reduce transmission \cite{Aronna2020,Bakare2014,Rodrigues2011}. 

In recent years, biological control has attracted growing attention as a complementary or alternative strategy to chemical interventions. In particular, the introduction of natural predators such as dragonflies \cite{Combes2013,Thu1980,Vatandoost2021} or other mosquito predators \cite{Sarfraz2019}, can serve as a natural biological control method. Motivated by this, the present work proposes and analyzes a novel mathematical model that integrates SIR-SI dynamics with a Lotka--Volterra type predator--prey interaction to explore biologically inspired control strategies. 

Specifically, we introduce and analyze a coupled model that combines an SIR–SI epidemic structure with a Lotka–Volterra system describing the interaction between vectors and their natural predators. Unlike previous models (e.g. \cite{Zhou2014}), the predator–prey subsystem in our setting admits periodic solutions that are not contained in any compact set including the origin. As a consequence, standard stability techniques based on Lyapunov or LaSalle arguments relying on compactness cannot be applied directly. Furthermore, the predator–prey oscillations imply that disease persistence is governed not only by epidemiological parameters but also by the magnitude of the underlying ecological cycles. To formalize this dependence, we introduce an ecological reproduction number $R_0(k_0)$, where $k_0$ indexes the invariant Lotka–Volterra orbits of the vector–predator subsystem. That is, the parameter $k_0$ measures the amplitude of predator–prey oscillations and determines the ratio between the maximal vector density and the minimal predator density along a given ecological cycle, thereby allowing us to assess whether natural predation alone can suppress disease transmission.
When natural predation alone cannot effectively limit disease transmission, we introduce an optimal control framework based on the managed release of predators. The objective is to identify a release strategy that balances the economic cost of intervention with its epidemiological benefits, reducing the burden of infection over time while preserving a large susceptible host population at the final time.

The control variable $u(t)$ represents the rate at which captive-bred dragonflies are released into the system. The performance of a given strategy is quantified by an objective functional that incorporates both costs and benefits. The cost of predator release is modeled either by a quadratic term $c\,u^2(t)$ or by a linear term $q\,u(t)$ over the time interval $[0,T]$. Epidemiological outcomes are accounted for by penalizing the cumulative number of infected hosts and by rewarding the number of susceptible hosts at the final time.

The paper is organized as follows: in Section \ref{section 2} we presents the proposed model; Section \ref{section 3} analyzes its dynamics; Section \ref{section 4} establishes the existence and characterization of an optimal control; and Section \ref{section 5} illustrates the model behavior through numerical simulations, providing practical insights for real-world applications. Finally, in Section \ref{section 6} we discuss some conclusions.

\section{The model}\label{section 2}
We begin this section by introducing the mathematical model under consideration. The system describes the interaction between a human host population, a vector population (e.g., mosquitoes), and a natural predator population that acts as a biological control mechanism. The host population follows a classical SIR-type structure, while the vector dynamics are governed by susceptible–infected classes subject to predation, and the predator population, named with $D$, evolves according to its own growth–mortality balance. This interaction is represented by the following diagram:
\begin{center}
	\begin{tikzpicture}[node distance=3cm, every node/.style={draw, circle, minimum size=1cm}]
		\node (Sh) {$S_h$};
		\node (Ih) [right of=Sh] {$I_h$};
		\node (Rh) [right of=Ih] {$R_h$};
		\node (Sv) [below of= Sh] {$S_v$};
		\node (Iv) [below of= Ih] {$I_v$};
		\node (D)  [right of=Iv] {$D$};
		\draw[->] (Sh) -- (Ih) node[midway,below,inner sep=1pt,draw=none] (betah) {};
		\draw[->] (Ih) -- (Rh); 
		\draw[->] (Sv) -- (Iv) node[midway,below,inner sep=1pt,draw=none] (betav) {};
		\draw[->,blue,dashed] (Iv) -- (betah.north);
		\draw[->,blue,dashed] (Ih) -- (betav.north);
		\draw[<->,red,dashed, bend right=-45] (Iv) to node[midway,below,inner sep=1pt,draw=none] (eta) {} (D) ;
		\draw[<->,red,dashed, bend right=-45] (Sv) to (D);
	\end{tikzpicture}
	
\end{center}

 The complete set of equations is
\begin{equation}\label{modelocompleto}
	\begin{cases}
		\dot{S_h} & = \mu_h N_h - b \dfrac{\beta_{h}}{N_h} S_h I_v - \mu_h S_h,\\
		\dot{I_h} & = b \dfrac{\beta_{h}}{N_h} S_h I_v - (\gamma+\mu_h) I_h, \\
		\dot{R_h} & = \gamma I_h - \mu_h R_h,\\
		\dot{S_v} & = \mu_v (S_v+I_v) - \left( b \dfrac{\beta_{v}}{N_h} I_h + \alpha D \right) S_v, \\
		\dot{I_v} & = b \dfrac{\beta_{v}}{N_h} S_v I_h - \alpha D I_v, \\
		\dot{D} & = \eta (S_v + I_v) D - \mu_D D.
	\end{cases}
\end{equation}
We consider a human population divided into susceptible $S_h(t)$, infected $I_h(t)$, and recovered $R_h(t)$ individuals, with total population $N_h = S_h + I_h + R_h$ assumed constant. This reflects balanced births and deaths, implemented through the parameter $\mu_h$, which represents both natural birth and death rates. New individuals enter the susceptible class at rate $\mu_h N_h$, and all compartments experience natural mortality at rate $\mu_h$. We assume that the disease does not cause additional mortality and that recovered individuals acquire permanent immunity.

Disease transmission occurs through a vector, such as a mosquito. Each vector bites on average $b$ individuals per unit time. The probability that a bite from an infected vector transmits the infection to a susceptible human is $\beta_h$, while a susceptible vector becomes infected by biting an infected human with probability $\beta_v$. This gives the frequency-dependent forces of infection $b \frac{\beta_h}{N_h} I_v(t)$ on humans and $b \frac{\beta_v}{N_h} I_h(t)$ on vectors. Infected humans recover at rate $\gamma$, while all individuals are subject to natural mortality at rate $\mu_h$, so the outflow from $I_h$ is $(\gamma + \mu_h) I_h$.

The vector population consists of susceptible $S_v(t)$ and infected $I_v(t)$ individuals. Births occur proportionally to the total vector population $N_v = S_v + I_v$ at rate $\mu_v N_v$, and all newborn vectors are assumed susceptible, since vertical transmission is neglected. Vectors are removed by predation from a natural enemy population $D(t)$ at rate $\alpha$ per predator, affecting both susceptible and infected vectors. Predators grow in proportion to the availability of prey with conversion efficiency $\eta$ and experience natural mortality at rate $\mu_D$. We assume that vectors do not experience disease-induced mortality.

When aggregated as $N_v = S_v + I_v$, the joint dynamics of vectors and predators form a classical Lotka--Volterra system:
\begin{equation}\label{eq: LV}
\begin{cases}
	\dot{N_v} = (\mu_v - \alpha D) N_v,\\[2mm]
	\dot{D} = (\eta N_v - \mu_D) D,
\end{cases}
\end{equation}
where mosquitoes act as prey and predators regulate their population, capturing the ecological basis of biological control.

In summary, the model integrates SIR dynamics of humans with ecological interactions between vectors and predators, accounting explicitly for demographic turnover, frequency-dependent transmission, permanent immunity, absence of disease-induced mortality, and predation-based vector control.

 Moreover, as $N_h$ is constant we can remove one dimension of the problem \eqref{modelocompleto} and simplifies the model to
	\begin{equation}\label{eq: modelo simple}    	
		\begin{cases}
			\dot{S_h} & = \mu_h N_h-b \frac{\beta_{h}}{N_h} S_h I_v-\mu_hS_h,\\
			\dot{I_h} & = b \frac{\beta_{h}}{N_h} S_h I_v - (\gamma+\mu_h) I_h, \\
			\dot{S_v} & = \mu_v (S_v+I_v) -(b \frac{\beta_{v}}{N_h}  I_h  + \alpha  D)S_v, \\
			\dot{I_v} & = b \frac{\beta_{v}}{N_h} S_v I_h - \alpha DI_v, \\
			\dot{D} & =  {\eta} (S_v + I_v ) D -\mu_D D .
		\end{cases}
	\end{equation}

\section{Dynamics of the system} \label{section 3}
In this section, we analyze the dynamical properties of the coupled SIR–SI–Lotka–Volterra system. Our goal is to understand how the interaction between epidemic variables and ecological predator–prey dynamics determines the qualitative behavior of solutions, including disease extinction, persistence, and long-term oscillations. Particular attention is paid to the role of the vector–predator subsystem, whose Lotka–Volterra structure induces invariant periodic orbits that strongly influence the epidemic dynamics.

We begin by establishing basic properties of the system, such as positivity, boundedness, and the characterization of equilibrium points. We then study the stability of disease-free and endemic states, highlighting the limitations of classical stability techniques in the presence of non-compact invariant sets. This leads naturally to the introduction of suitable threshold quantities, including the basic reproduction number and its ecological counterpart, which incorporate the effect of predator–prey oscillations on disease transmission.

\subsection{Positive invariance and equilibrium points} 
\begin{prop} \label{pr: soluciones globales}
		Given non negative initials values, the solutions of system \eqref{eq: modelo simple} are non-negative, bounded for  all $t\geq 0$ and therefore globally defined. 
\end{prop}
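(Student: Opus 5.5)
The plan has three steps: non-negativity, boundedness of each block of variables, and global existence.

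\emph{Non-negativity.} The right-hand side of \eqref{eq: modelo simple} is polynomial, hence locally Lipschitz, so a unique local solution exists on a maximal interval $[0,t^*)$. I will check that the vector field is quasi-positive: on each face $\{x_i=0\}$ of $\R^5_{\ge0}$, the $i$-th component is nonnegative when the other variables are nonnegative. Concretely:
\begin{itemize}
\item if $S_h=0$, then $\dot S_h=\mu_hN_h\ge0$;
\item if $I_h=0$, then $\dot I_h=b\frac{\beta_h}{N_h}S_hI_v\ge0$;
\item if $S_v=0$, then $\dot S_v=\mu_vI_v\ge0$;
\item if $I_v=0$, then $\dot I_v=b\frac{\beta_v}{N_h}S_vI_h\ge0$.
\end{itemize}
The $D$ equation has the form $\dot D=D(\eta N_v-\mu_D)$, so
\[
D(t)=D(0)\exp\Big(\int_0^t(\eta N_v-\mu_D)\,ds\Big)\ge0 .
\]
The standard invariance result for quasi-positive systems then gives that $\R^5_{\ge0}$ is forward invariant. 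Alternatively, one argues at the first exit time and uses variation-of-constants formulas, for instance $I_h(t)=e^{-(\gamma+\mu_h)t}\big(I_h(0)+\int_0^t e^{(\gamma+\mu_h)s}\,b\frac{\beta_h}{N_h}S_hI_v\,ds\big)$.

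\emph{Host bounds.} Adding the first two equations gives
\[
\frac{d}{dt}(S_h+I_h)=\mu_hN_h-\mu_hS_h-(\gamma+\mu_h)I_h\le\mu_h\big(N_h-(S_h+I_h)\big).
\]
Hence $S_h+I_h\le\max\{N_h,S_h(0)+I_h(0)\}$, which equals $N_h$ for consistent initial data, since $R_h=N_h-S_h-I_h\ge0$.

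\emph{Vector and predator bounds.} This is the main obstacle, because the Lotka--Volterra block admits no compact absorbing set containing the origin. Adding the vector equations gives $\dot N_v=(\mu_v-\alpha D)N_v$, so $(N_v,D)$ solves \eqref{eq: LV} exactly and is decoupled from the host variables. I will use the classical first integral
\[
V(N_v,D)=\eta N_v-\mu_D\ln N_v+\alpha D-\mu_v\ln D,
\]
for which $\dot V=0$ along solutions when $N_v,D>0$. Since $V\to\infty$ as $N_v\to\infty$ or $D\to\infty$, the level set through the initial point bounds $N_v(t)$ and $D(t)$ uniformly in $t$.

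The degenerate boundary cases are handled separately:
\begin{itemize}
\item if $D(0)=0$, then $D\equiv0$ and $N_v(t)=N_v(0)e^{\mu_vt}$, which is finite on every bounded interval but unbounded on $[0,\infty)$;
\item if $N_v(0)=0$, then $N_v\equiv0$ and $D$ decays.
\end{itemize}
So in the interior case the solution is bounded uniformly in time, while in the case $D(0)=0$ it is only bounded on compact time intervals. Finally, $0\le S_v,I_v\le N_v$ by non-negativity, which bounds the individual vector compartments.

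\emph{Global existence.} The solution stays in a bounded set on every finite interval, so it cannot blow up, and the continuation theorem gives $t^*=\infty$.
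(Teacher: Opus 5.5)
Your argument is correct, and on one point it is more careful than the paper's own proof. The host bound and the exponential formula for $D$ are the same as in the paper.

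\textbf{Non-negativity.} The paper proceeds piecemeal:
\begin{itemize}
\item $S_h$ and $D$ are handled directly;
\item $S_v\ge0$ follows from $N_v\ge0$ and the sign of $\dot S_v$ on $\{S_v=0\}$;
\item $(I_h,I_v)$ is treated as a cooperative (Metzler) subsystem driven by the non-negative inputs $S_h,S_v,D$, with a comparison result from monotone systems theory.
\end{itemize}
Your single face-by-face quasi-positivity check of the full vector field, combined with the standard orthant-invariance theorem, does the same job in one step. It needs no monotone-systems machinery.

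\textbf{Bounds on $(N_v,D)$.} The paper just asserts that Lotka--Volterra solutions are periodic orbits and hence bounded for every initial condition. You instead use the explicit first integral $V$, the dimensional form of the paper's later $V_{LV}$. Its level sets are compact in the open quadrant, so you actually prove the uniform bound rather than quote it.

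\textbf{Boundary cases.} This is the substantive difference, and your analysis is right. On the invariant face $D=0$ with $N_v(0)>0$ one gets $N_v(t)=N_v(0)e^{\mu_v t}\to\infty$. So the paper's claim that every Lotka--Volterra solution is periodic fails there. Read as boundedness on $[0,\infty)$ for all non-negative data, the proposition is false as stated.

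What is true, and what you establish, is:
\begin{itemize}
\item non-negativity and global existence for all non-negative data;
\item uniform-in-time bounds precisely when $D(0)>0$ or $N_v(0)=0$.
\end{itemize}
The statement should carry the hypothesis $D(0)>0$. The paper implicitly assumes this anyway when it works with $\ln D$ in $V_{LV}$. Under that hypothesis your proof is complete.
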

	\begin{proof}
		Observe that if $ S_h(t) = 0 $, then $ \dot{S_h}(t) = \mu_h N_h > 0 $ proving that $S_{h}(t)\ge 0$ for all $t\ge 0$, whenever $S_{h}(0)\ge 0$. 
		
		For $D(t)$ we have the following integral expression 
		$$D(t) = D(0)e^{\int_0^t  {\eta}(S_v(s)+I_v(s))-\mu_D(s)\,ds}$$ 
		Therefore, $ D $ is non-negative if it has non-negative initial conditions. Moreover, since $(N_v,D)$ is a solution of the Lotka-Volterra equation  \eqref{eq: LV} we have that $N_v\geq 0$. Hence if $S_v=0$ is immediate that $I_v\geq0$. Therefore, if $I_v=0$ the solutions $S_v$ and $I_v$ remains zero for all times. On the other hand, if $I_v>0$, using that $\dot{S_v}  = \mu_v I_v> 0$ we obtain that $S_v>0$.
		
		To study the the positiveness of $I_h$ and $I_v$ we observe the following sub-system 
		\begin{align*}
			\dot{I}_h& = - (\gamma + \mu_h) I_h + \beta_{h} S_h I_v,\\
			\dot{I}_v & = \beta_{v} S_v I_h -   \alpha D I_v.		
		\end{align*}
	As the inputs $S_h(t)$, $S_v(t)$, and $D(t)$ are non-negative, and the cross partial derivatives with respect to the other variables are positive, the system is a \emph{cooperative monotone system}. 
	By \cite[Proposition~1.1]{Smith1995}, non-negative initial conditions $I_h(0), I_v(0) \ge 0$ imply that $I_h(t), I_v(t) \ge 0$ for all $t \ge 0$. A cooperative monotone system, also called an \emph{order-preserving system}, is one in which increasing any variable does not decrease the rate of change of the others. 
	The interested reader is referred to \cite{Smith1995,SmithWaltman1995} for further details on monotone systems.
			
Finally, as $ N_h(t) $ is constant and since
$$
(S_h + I_h)'(t) = \mu_h N_h - \gamma I_h - \mu_h(S_h + I_h) \leq \mu_h N_h - \mu_h(S_h + I_h),
$$
we deduce that $ S_h + I_h $ is bounded, which implies that both $ S_h $ and $ I_h $ are also bounded. Similarly, from \eqref{eq: LV}, since $ N_v $ and $ D $ satisfy a Lotka-Volterra system, their solutions are periodic orbits and thus bounded for each initial condition. Consequently, we conclude that $ S_v $ and $ I_v $ are bounded as well.

From this analysis, it follows that the solutions of system \eqref{eq: modelo simple} are globally defined, completing the proof.

	\end{proof}
	In the following, we focus on identifying and analyzing the equilibrium points of system \eqref{eq: modelo simple}. Particular attention will be given to the disease-free and endemic equilibria. Additionally, we introduce the basic reproduction number, $R_0$, which serves as a threshold parameter determining whether the infection can invade and persist in the population. To simplify the analysis and reduce the number of parameters, we will employ adimensionalization techniques, rescaling the system to work with dimensionless variables and parameters. The value of $R_0$ will guide the classification of equilibria and the study of their stability properties.
	
	First, we perform a time rescaling using the mosquito birth rate	
	$$\tau=\mu_v t,$$ so that $\frac{d}{dt} = \mu_v \frac{d}{d\tau}$. We also introduce the following dimensionless variables
	\begin{equation}
		s_h=\frac{S_h}{N_h};\	i_h=\frac{I_h}{N_h};\ 	s_v=\frac{S_v}{N_v^*};\ i_v=\frac{I_v}{N_v^*};\ d=\frac{D}{D^*},
	\end{equation}
	where $N_v^*=\frac{\mu_D}{\eta}$ and $D^*=\frac{\mu_v}{\alpha}$. Now, taking account of this and defining
	$$
	B_h=\frac{b\beta_h\mu_D}{\mu_vN_h\eta};\,\, B_v=\frac{b\beta_v}{\mu_v};\,\, \widehat{\mu_h}=\frac{\mu_h}{\mu_v};\,\, \widehat{\gamma}=\frac{\gamma}{\mu_v};\,\, \widehat{\mu_D}=\frac{\mu_D}{\mu_v},
	$$
	we can rewrite the system as follows
	\begin{equation}
		\label{eq: modelo SILV red}
		\begin{cases}
			\frac{\partial s_h(\tau)}{\partial \tau} & = \widehat{\mu_h}(1-s_h) -B_h s_h i_v,\\
			\frac{\partial i_h(\tau)}{\partial \tau}& = B_h s_h i_v - (\widehat{\gamma}+\widehat{\mu_h}) i_h, \\
			\frac{\partial s_v(\tau)}{\partial \tau} & =  (s_v + i_v) -B_v s_v i_h   - d s_v, \\
			\frac{\partial i_v(\tau)}{\partial \tau} & = B_v s_v i_h - d i_v, \\
			\frac{\partial d(\tau)}{\partial \tau} & =  \widehat{\mu_D} d(s_v + i_v -1 )  .
		\end{cases}
	\end{equation}
	
	\begin{rem}[Simplifying the equations]
		The original parameters had units like `` per day " for rates or combinations involving hosts, mosquitoes and predators. After rescaling, everything becomes nice unitless numbers that combine these physical quantities (like $B_h$ combining biting rate, transmission probability, and mosquito lifespan). This cleaner form helps us see what really matters in the disease dynamics without getting distracted by measurement units.
	\end{rem}
	\begin{rem}
	\textbf{Notation:} For simplicity, we will omit the hat notation ($\hat{\cdot}$) for rescaled parameters in the dimensionless equations, as their dimensionless nature is clear from context. Moreover, we will revert to using capital letters for variable names. That is
	\begin{equation}
		\label{eq: modelo SILV}
		\begin{cases}
			\dot{S_h} & = \mu_h(1-S_h)-B_hS_hI_v\\
			\dot{I_h} & = B_h S_h I_v - (\gamma+\mu_h) I_h, \\
			\dot{S_v} & =  (S_v+I_v) - B_v S_v I_h  -  DS_v, \\
			\dot{I_v} & = B_v S_v I_h - DI_v, \\
			\dot{D} & =   (S_v + I_v )\mu_D D -\mu_D D .
		\end{cases}
	\end{equation}
\end{rem}
Moreover the Lotka Volterra subsystem with variables $N_v,D$ reads
	\begin{equation}\label{eq: LV simple}
\begin{cases}
	\dot{N_v} = (1 - D) N_v,\\[2mm]
	\dot{D} = \mu_D (N_v -1) D,
\end{cases}
\end{equation}

	The system \eqref{eq: modelo SILV} has a trivial disease-free equilibrium
	\begin{equation*}\label{eq: free-of-dis-1}
	E_1=( S_h: 1, I_h: 0, S_v:0, I_v: 0, D:0)
	\end{equation*}
	and a non-trivial disease-free equilibrium  
		\begin{equation*}\label{eq: free-of-dis-2}
	E_2=( S_h: 1 , I_h: 0, S_v: 1 , I_v: 0, D:1 )
	\end{equation*}
	which corresponds to the absence of infected vectors and hosts, and an endemic equilibrium point,  
	$$
	E_e=\begin{cases}
		S_h^* = \dfrac{B_v \mu_h + \gamma + \mu_h}{B_v (B_h + \mu_h)}, \\
		I_h^* = \dfrac{\mu_h (B_h B_v - \gamma - \mu_h)}{B_v (B_h + \mu_h)(\gamma + \mu_h)}, \\
		S_v^* = \dfrac{(B_h + \mu_h)(\gamma + \mu_h)}{B_h (B_v \mu_h + \gamma + \mu_h)},\\
		I_v^* = \dfrac{\mu_h (B_h B_v - \gamma - \mu_h)}{B_h (B_v \mu_h + \gamma + \mu_h)}, \\
		\\
		D^* = 1.
	\end{cases}
	$$
	with nonnegative coordinates whenever 
	\begin{align}
		\label{eq: condicion_EE}
		B_hB_v \ge  (\gamma  +\mu_h).
	\end{align}
	We note that that equilibriums $E_2$ and $E_e$ derived into the only nontrivial equilibrium for the Lotka Volterra system given by $(N_v,D)=(1,1)$.

In what follows, we derive the basic reproduction number for this model and analyse the stability of the equilibrium points.

\subsection{Dynamics of $I_h$ and $I_v$}

We begin this section by defining 
the basic reproduction number
\begin{equation}\label{R0}
R_0=\sqrt{\frac{B_h B_v }{ (\gamma + \mu_h)  }}.
\end{equation}

We first prove non stability for $E_1$ and analyse the local stability of the infective compartments for $E_2$.

\begin{prop}\label{local stability}
The trivial disease-free equilibrium $E_1$ is unstable, whereas the non trivial disease-free equilibrium $E_2$ is unstable if $R_0 > 1$ and for $R_0 < 1$, for initial conditions near ${E}_2$,
 the infected populations $I_h(t)$ and $I_v(t)$ goes to zero. Furthermore, if $R_0<1$ the only equilibrium points are the two disease-free equilibria $E_1$ and $E_2$.
\end{prop}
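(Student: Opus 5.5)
Our plan is to linearize system \eqref{eq: modelo SILV} at each disease-free equilibrium and then handle the infective block separately.

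\textbf{Instability of $E_1$.} At $E_1=(1,0,0,0,0)$ the Jacobian is block triangular. The row of $\dot S_v$ contains the entry $\partial \dot S_v/\partial S_v = 1 - B_vI_h - D = 1$ there. Restricting to the invariant set $\{I_h=I_v=0\}$, the $(N_v,D)$ subsystem \eqref{eq: LV simple} linearized at the origin has eigenvalues $1$ and $-\mu_D$. The positive eigenvalue $1$ is an eigenvalue of the full Jacobian, so $E_1$ is a saddle and hence unstable. Concretely, $N_v$ grows exponentially near the origin when $D$ is small.

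\textbf{Analysis of $E_2$.} At $E_2=(1,0,1,0,1)$ we order the variables as $(I_h,I_v,S_h,S_v,D)$. The infective block decouples at the linear level, because $\dot I_h$ and $\dot I_v$ depend only on $I_h,I_v$ to first order near $S_h=S_v=D=1$. This block is
\[
J_I=\begin{pmatrix} -(\gamma+\mu_h) & B_h\\ B_v & -1\end{pmatrix}.
\]
Its trace is negative and $\det J_I=(\gamma+\mu_h)-B_hB_v=(\gamma+\mu_h)(1-R_0^2)$.

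If $R_0>1$, the determinant is negative, so $J_I$ has a real positive eigenvalue and $E_2$ is unstable. The full Jacobian is block triangular with $J_I$ as a diagonal block, since the other variables do not feed linearly into $I_h,I_v$. By the linearization (Hartman--Grobman/instability) theorem, $E_2$ is therefore unstable.

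If $R_0<1$, then $J_I$ is Hurwitz. The remaining block, however, contains the Lotka--Volterra linearization with eigenvalues $\pm i\sqrt{\mu_D}$. So $E_2$ is not hyperbolic, and we cannot conclude asymptotic stability of the full point. This is why the statement only claims $I_h,I_v\to0$.

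\textbf{Convergence of the infectives when $R_0<1$ (main obstacle).} We will use a comparison argument. Near $E_2$, the $(N_v,D)$ orbit is a small Lotka--Volterra cycle around $(1,1)$, by conservation of the first integral $\mu_D(N_v-\ln N_v)+(D-\ln D)$. Hence $S_v\le N_v\le 1+\varepsilon$ and $D\ge 1-\varepsilon$ for all time. We also have $S_h\le 1$ eventually, up to $\varepsilon$, by Proposition \ref{pr: soluciones globales} and $\dot S_h\le\mu_h(1-S_h)$.

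The pair $(I_h,I_v)$ then satisfies the cooperative differential inequality $\dot x\le J_I^\varepsilon x$, where
\[
J_I^\varepsilon=\begin{pmatrix} -(\gamma+\mu_h) & B_h(1+\varepsilon)\\ B_v(1+\varepsilon) & -(1-\varepsilon)\end{pmatrix}.
\]
For $\varepsilon$ small this matrix is still Hurwitz, because $R_0<1$ is an open condition. By the comparison principle for cooperative systems (as in \cite{Smith1995}), $0\le (I_h,I_v)(t)\le e^{tJ_I^\varepsilon}(I_h(0),I_v(0))\to0$.

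The delicate point is choosing $\varepsilon$ uniformly. It comes from the fact that initial data near $E_2$ lie on Lotka--Volterra orbits confined to a small neighborhood of $(1,1)$.

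\textbf{No endemic equilibrium when $R_0<1$.} Any equilibrium has $\dot D=0$, so either $D=0$ or $N_v=1$.

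If $D=0$, then $\dot N_v=N_v=0$, which forces $S_v=I_v=0$. This in turn gives $I_h=0$ and $S_h=1$, i.e.\ the equilibrium is $E_1$.

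If $N_v=1$, then $\dot N_v=0$ forces $D=1$. Solving the remaining equations gives either $I_h=I_v=0$ (which is $E_2$), or the expressions of $E_e$. In $E_e$ the numerators of $I_h^*,I_v^*$ carry the factor $B_hB_v-(\gamma+\mu_h)=(\gamma+\mu_h)(R_0^2-1)<0$, so these coordinates are negative and not admissible. Hence only $E_1$ and $E_2$ remain.
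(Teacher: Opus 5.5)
Your proposal is correct and follows essentially the same route as the paper: the positive eigenvalue $1$ at $E_1$, the block-triangular Jacobian at $E_2$ whose infective block is $F-V$, a perturbation of $F-V$ in the linear system \eqref{matriz infec} when $R_0<1$, and the factor $R_0^2-1$ in the endemic coordinates. Your version is more careful than the paper's: you justify that the coefficient matrix stays near $F-V$ for all time via the first integral $\mu_D(N_v-\ln N_v)+(D-\ln D)$, which is the correct one for $\mu_D\neq 1$, and a Metzler comparison as in Proposition~\ref{Stability}, and you classify all equilibria directly; only replace ``saddle'' and the appeal to Hartman--Grobman (both $E_1$ and $E_2$ are non-hyperbolic) by the linearized instability theorem, which needs just one eigenvalue with positive real part.
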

\begin{proof}
As it is standard for studying the stability of the equilibrium points for these models, we rearrange the system in variables $I_h,I_v,S_h,S_v,D$ obtaining the equilibriums 
$\bar{E}_1=(0,0,1,0,0)$ and  $\bar{E}_2=(0,0,1,1,1)$.
We now  compute the  differential matrix for the function $F(I_h,I_v,S_h,S_v,D)$ evaluated on $\bar{E}_1$, obtaining
$$
DF(\bar{E}_1) =
\begin{bmatrix}
	-(\gamma + \mu_h) & B_h & 0 & 0& 0 \\
	0  & 0 & 0 & 0 & 0 \\
	0 & -B_h & -\mu_h & 0 & 0 \\
	0& 1&0 & 1 &  0 \\
	0 &  0 & 0 & 0 & -\mu_D
\end{bmatrix}
$$ 
proving that $\bar{E}_1$ is a non-hyperbolic, non-stable equilibrium since it has a zero and a positive eigenvalue. Now, for $\bar{E}_2$ we obtain
$$
DF(\bar{E}_2) =
\begin{bmatrix}
-(\gamma + \mu_h) & B_h & 0 & 0& 0 \\
B_v  & -1 & 0 & 0 & 0 \\
0 & -B_h & -\mu_h & 0 & 0 \\
-B_v& 1&0 & 0&  - 1 \\
0 &  \mu_D & 0 & \mu_D & 0
\end{bmatrix}.
$$

 If we call 
\[
F=\begin{bmatrix}
	0 & B_h \\
	B_v& 0
\end{bmatrix}, \quad 
V=\begin{bmatrix}
	\gamma+\mu_h &0 \\
 0& 1
\end{bmatrix}, \quad 
J_4=\begin{bmatrix}
-\mu_h & 0 &0\\
0& 0& - 1\\
0 &  \mu_D & 0
\end{bmatrix},
\]
we see that $J_4$ has a negative eigenvalue and two eigenvalues with real part iqual to zero.
Thus we cannot apply the next-generation matrix as 
developed in \cite{Driessche2002}. Despite of that, we see that
\begin{equation*}
R_0=\sqrt{\frac{B_h B_v }{ (\gamma + \mu_h)  }}=\rho(FV^{-1})
\end{equation*}
the spectral radius of the matrix $FV^{-1}$. This gives us the following condition:
the eigenvalues of the submatrix
$$F-V = \begin{bmatrix}
-(\gamma + \mu_h) &B_h\\
B_v & -1 
\end{bmatrix} $$
has negative real part if and only if
$R_0 < 1$ and positive real part if and only if $R_0>1$. Thus, we can conclude that if $R_0>1$, the equilibrium $\bar{E}_2$ is unstable. Moreover, note that we can rewrite the infective coordinates of the endemic equilibrium $E_e$ as follows:
\begin{align*}
I_h^* &= \dfrac{\mu_h (R_0^2-1)}{B_v (B_h + \mu_h)} \\
I_v^* &= \dfrac{\mu_h (R_0^2-1)(\gamma + \mu_h)}{B_h (B_v \mu_h + \gamma + \mu_h)},
\end{align*}
from where we have that $R_0>1$ if and only if condition \eqref{eq: condicion_EE} for the existence of $E_e$ is fullfiled.

Also, since 
\begin{equation}\label{matriz infec}
	\begin{bmatrix} 
\dot{I_h} \\ \dot{I_v} \end{bmatrix} = 
\begin{bmatrix}
-(\gamma + \mu_h) & B_h S_h\\
B_v S_v & -D
\end{bmatrix}. \begin{bmatrix} 
{I_h} \\ {I_v} \end{bmatrix}
\end{equation}
for initial data $(I_h(0),I_v(0),S_h(0),S_v(0),D(0))$ near $\bar{E}_2$ we have that the matrix that defines the linear evolution is near $F-V$ proving that if $R_0<1$, the variables $I_h(t)$ and $I_v(t)$ of these solutions goes to zero as $t\to \infty$. Moreover, being the solutions of $I_h$ uniformly bounded, we have that $R_h(t) \to 0$ and thus $S_h(t)\to 1$ as $t\to \infty$.
\end{proof}

We now analyse the global stability of the non-trivial disease-free equilibrium $E_2$.
We consider the Lyapunov functional of the Lotka--Volterra subsystem
$$
V_{LV}(N_v,D) = \ln(N_v)-N_v+ \ln(D)-D.
$$
Since $V_{LV}$ remains constant along trajectories, the level sets of $V_{LV}$ describe the orbits of the $(N_v,D)$ dynamics. 
Also, since the function $f(x)=\ln(x)-x$ satisfies $f(x)\leq -1$ for all $x>0$, with equality only at $x=1$, it follows that
$$
V_{LV}(N_v,D) = \big(\ln(N_v)-N_v\big)+\big(\ln(D)-D\big) \leq -2.
$$
Given $k_0\leq -2$ we define the domain 
\[
\Omega^+_{k_0}=\left\{ (S_h,I_h,S_v,I_v,D)\in \mathbb{R}^5_+: \ S_h+I_h\leq 1,\ 
V_{LV}(S_v+I_v,D) \leq k_0 \right\}
\]
consisting of the compact region, bounded by the level set $V_{LV}(S_v+I_v,D) = k_0$ for variables $N_v$ and $D$.
Hence,
\begin{itemize}
	\item $k_0=-2$ corresponds to the coexistence equilibrium $(N_v,D)=(1,1)$,
	\item $k_0>-2$ is not possible,
	\item $k_0<-2$ corresponds to closed orbits surrounding $(1,1)$.
\end{itemize}

We observe that $\Omega^+_{k_0}$ is positively invariant, from Proposition \ref{pr: soluciones globales} and the fact that conditions $S_h+I_h\leq 1$ and $V_{LV}(S_v+I_v,D)\leq k_0$ are preserved under the flow of the system.

Moreover, given $k_0\leq-2$ there exist positives constants such that  $0 <a(k_0)\leq D\leq b(k_0)$ and $0 <a(k_0)\leq N_v\leq b(k_0)$.
\begin{center}
\begin{tikzpicture}
	\begin{axis}[
		axis lines=middle,
		xlabel=$x$,
		ylabel={$y$},
		ymin=-3, ymax=1,
		xmin=-0.5, xmax=4,
		xtick={0.155,3.1},
		xticklabels={{\hspace{1.8em}\tiny$a(k_0)$},{\hspace{-1.5em}\tiny$b(k_0)$}}, 
		ytick={-1,0},
		grid=major,
		width=10cm, height=6cm
		]
		
		\addplot[domain=0:4, samples=100, thick, blue] {ln(x)-x};
		\node at (axis cs:2.3,-1) {$y = \ln(x) - x$};
				
		\addplot[dashed, red] coordinates {(0,-2) (4,-2)};
		
		\draw[dashed] (axis cs:0.155,-3) -- (axis cs:0.155,2);
		\draw[dashed] (axis cs:3.1,-3) -- (axis cs:3.1,2);
		\draw (axis cs:1,-3) -- (axis cs:1,2);
		
		\addplot[only marks, mark=*, red] coordinates {(3.1,-2)};
		\addplot[only marks, mark=*, red] coordinates {(0.155,-2)};
		
		\node[anchor=south east] at (axis cs:1,0.2) {\hspace{1em}\tiny $x=1$};
		
	\end{axis}
\end{tikzpicture}
\end{center}
In order to prove global stability for the infective variables, we will need a stronger conditions than $R_0<1$. Given $k_0<-2$, we define
	$$
	R_0(k_0) := \frac{B_h B_v}{\gamma+\mu_h}\,\frac{b(k_0)}{a(k_0)}.
	$$
Note that since $0<a(k_0)<b(k_0)$, then $R_0(k_0)>R_0^2$. 
\begin{prop}\label{Stability}
	Let $k_0 < -2$ and consider the invariant set $\Omega^+_{k_0}$. 
	If $R_0(k_0)<1$, then the non-trivial disease-free equilibrium $E_2$ is stable in $\Omega^+_{k_0}$ and $I_h,I_v \to 0$ as $t\to \infty$.
\end{prop}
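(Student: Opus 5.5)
On $\Omega^+_{k_0}$ the predator--prey cycle gives uniform bounds $a(k_0)\le D(t)\le b(k_0)$ and $S_v(t)\le N_v(t)\le b(k_0)$ for all $t\ge0$, by positive invariance of $\Omega^+_{k_0}$ and Proposition \ref{pr: soluciones globales}. We also have $S_h\le 1$. The plan is to compare the infective subsystem \eqref{matriz infec} with a linear system with constant coefficients, and then either use its Perron eigenvector or a weighted linear Lyapunov function.

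The first step is the comparison. Since the system \eqref{matriz infec} is cooperative (as already observed in the proof of Proposition \ref{pr: soluciones globales}), the bounds on $S_h$, $S_v$ and $D$ give
\begin{align*}
\dot I_h &\le -(\gamma+\mu_h) I_h + B_h I_v,\\
\dot I_v &\le B_v\, b(k_0)\, I_h - a(k_0)\, I_v .
\end{align*}
By the comparison principle for cooperative systems \cite{Smith1995}, $(I_h,I_v)$ is bounded above componentwise by the solution of the linear system with matrix
\[
M=\begin{bmatrix}-(\gamma+\mu_h) & B_h\\ B_v b(k_0) & -a(k_0)\end{bmatrix}.
\]
This matrix is Metzler with negative trace. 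Its determinant is $(\gamma+\mu_h)a(k_0)-B_hB_v b(k_0)$, which is positive exactly when
\[
\frac{B_hB_v}{\gamma+\mu_h}\,\frac{b(k_0)}{a(k_0)}<1,
\]
that is, when $R_0(k_0)<1$. In that case both eigenvalues of $M$ have negative real part, so $I_h,I_v\to 0$ exponentially fast.

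Alternatively, one can use a Lyapunov function. Take $L=c_1 I_h + c_2 I_v$ with $c_1=B_v b(k_0)$ and $c_2=\gamma+\mu_h$, chosen so that the cross terms balance. Then
\[
\dot L \le I_h\bigl(-c_1(\gamma+\mu_h)+c_2B_vb(k_0)\bigr)+I_v\bigl(c_1B_h-c_2a(k_0)\bigr).
\]
The first coefficient vanishes, and the second equals $(\gamma+\mu_h)a(k_0)\bigl(R_0(k_0)-1\bigr)<0$. This yields $I_v\to0$ in $L^1$, and $L$ decreases along trajectories. With a slightly perturbed choice of weights (possible because $R_0(k_0)<1$ is a strict inequality) both coefficients become negative. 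Then $\dot L\le -\varepsilon L$, which gives exponential decay and Lyapunov stability of the infective part simultaneously.

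To conclude stability of $E_2$ in $\Omega^+_{k_0}$, I would argue as follows. Since $I_h\to0$ exponentially, the equation $\dot S_h=\mu_h(1-S_h)-B_hS_hI_v$ forces $S_h\to1$. Also $I_v\to 0$ while $N_v$ stays on its Lotka--Volterra orbit, so $S_v$ tracks $N_v$. Stability near $E_2$ in the $(N_v,D)$ directions is only neutral, not asymptotic, and it comes from the conserved quantity $V_{LV}$: small initial perturbations stay on nearby level sets, close to $(1,1)$.

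The main obstacle is precisely this last point, namely what ``stable'' can mean given the non-hyperbolic Lotka--Volterra part. I would state it as Lyapunov (neutral) stability of $E_2$ together with attractivity of the infective coordinates $I_h,I_v\to0$ and $S_h\to 1$, and I would avoid any LaSalle argument, which needs compactness that is not available here.
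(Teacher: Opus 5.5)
Your argument is correct and is essentially the paper's: the same bounds $S_h\le 1$, $S_v\le b(k_0)$, $D\ge a(k_0)$, the same Metzler matrix $M$ whose determinant $(\gamma+\mu_h)a(k_0)-B_hB_v b(k_0)$ is positive exactly when $R_0(k_0)<1$, and the same componentwise comparison with $z'=Mz$. Your weighted linear Lyapunov function and your discussion of neutral stability in the $(N_v,D)$ directions go beyond the paper's proof, which only shows $I_h,I_v\to0$ and leaves the stability of $E_2$ implicit; note, however, that for that step the conserved quantity of \eqref{eq: LV simple} is $\mu_D(\ln N_v-N_v)+\ln D-D$, which coincides with $V_{LV}$ only when $\mu_D=1$.
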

\begin{proof}
Observe that
	$$
	\begin{cases}
			\dot{I}_h = B_h S_h I_v - (\gamma + \mu_h) I_h&\leq B_h I_v - (\gamma + \mu_h) I_h \\
			\dot{I}_v = B_v S_v I_h -  D I_v&\leq B_v b(k_0) I_h -a(k_0) I_v.
		\end{cases}
	$$
	We define the Metzler matrix
		\begin{equation}\label{matriz Metzler}
	M=
\begin{bmatrix}
-(\gamma + \mu_h) & B_h \\
B_v b(k_0) & -a(k_0)
\end{bmatrix}
\end{equation}
which has negative trace and 
	$$det(M)= (\gamma + \mu_h)a(k_0) - B_h B_v  b(k_0) >0,
	$$
proving that both eigenvalues has negative real part. If we consider the constant coefficient system $z'(t)=Mz(t)$, with initial data $(I_h(0),I_v(0))$, since $M$ is a Metzler matrix, we have that $I_h(t)\le z_1(t)$ and $I_v(t)\le z_2(t)$, yielding 
both solutions goes to zero as $t$ goes to infinity.
\end{proof}

 In this sense, $R_0(k_0)$ can be interpreted as an \emph{ecological reproduction number}, since the ratio $\frac{b(k_0)}{a(k_0)}>1$ plays a crucial role measuring the maximum quantity of vectors over the mininum quantity of predators. Note that this ratio $\frac{b(k_0)}{a(k_0)}\to +\infty$ as $k_0\to -\infty$ showing that for $k_0$ negative with absolut value large enough, the condition for natural control of infected hosts and vectors will not be achived. Biologically, this is reasonable, as it reflects a situation where vectors outnumber their predators, providing the necessary impetus to implement optimal control strategies.

\smallskip
Finally, to study the dynamics of the endemic equilibrium, which arises when $R_0>1$, we compute 
the differential matrix evaluated on the rearrange equilibrium $\bar{E}_e=(I_h^*, I_v^*, S_h^*, S_v^*,1)$. Using that $S_v^*+I_v^*=1$ we obtain
$$
DF(\bar{E}_e) =
\begin{bmatrix}
	-(\gamma + \mu_h) & B_h S^*_h & B_h I^*_v  & 0 & 0 \\
	B_v S^*_v & -1 & 0 & B_v I^*_h & -I^*_v \\
	0 & -B_h S^*_h & -\mu_h-B_h I^*_v & 0 & 0 \\
	-B_v S^*_v & 1&0 & -B_vI^*_h&  - S^*_v \\
	0 &  \mu_D & 0 & \mu_D  & 0
\end{bmatrix}.
$$

Using the Routh--Hurwitz criterion, one finds that this matrix has three eigenvalues with negative real parts and a pair of complex eigenvalues with zero real part. In this setting, classical approaches such as the Hartman–Grobman theorem or the construction of a Lyapunov functional do not appear to be directly applicable. For the parameter values considered in the numerical simulations, the corresponding equilibrium seems to be stable but not asymptotically stable. Moreover, since the basic reproduction number $R_0$ depends on several parameters, alternative  qualitative behaviors may arise for different parameter regimes.

\section{Optimal control problem}\label{section 4}
		
As mentioned in the introduction, our goal is to maximize the final number of susceptible hosts, meaning hosts that were never infected, by introducing an external source of predators, denoted as $u:[0,T]\to [0,u_\text{max}]$, where $u_{max}$ represents the maximum breeding capacity. Consequently, the last equation in all models will be modified as 
$$\dot{D} = (S_v + I_v - 1)\mu_D D + u(t).$$
		
\begin{rem} \label{re: soluciones acotadas}
	Observe that given initial conditions the solutions of the new system remain non-negative and bounded for all $ t \geq 0 $. To see this, we note that the proof is similar to the case without control (see Proposition \ref{pr: soluciones globales}), since we are adding a non-negative inhomogeneity to the equation for $D$.
\end{rem}  		
	
We want to find a control for the previous system by inputting new predators which 
 penalizes the cost of breeding predators in the interval of time $[0,T]$, while simultaneously minimizes the cumulative number of infected hosts $I_h(t)$ and maximizes the number of susceptible hosts at the final time $aS_h(T)$. 
Since we will consider a maximization problem, 
given a fixed $T$ and weight parameters $c\ge 0$, $q\ge 0$, $r\ge0$ and $a\ge 0$ with $(c,q)\neq (0,0)$, we consider the functional
\begin{equation}
J(u) = \int_0^T -c u^2(t)-qu(t)-rI_h(t) \,dt +a S_h(T).
\end{equation}
The optimal control problem is 
\begin{subequations} \label{eq: OC max}
\begin{align} \label{eq: maximo J}
&\max_{u\in \mathcal{U}}J(u)=\int_0^T -c u^2(t)-qu(t)-rI_h(t) dt +a S_h(T),\\
&\dot{X}(t)=f(X,u), \label{eq: ode_control}
\end{align} 
\end{subequations}
where
\begin{align}
	\label{eq: medibles} 
	{\mathcal U}=\left\{u:[0,T] \to \R \text{ measurable functions such that } u(t)\in [0,u_{max}] \text{ a.e.} \right\},
\end{align} 
 $X=(S_h,I_h,S_v,I_v,D)$ and 
\begin{align}
	f(X,u)= 
	\begin{cases}
&  \mu_h (1-S_h)- B_h S_h I_v,\\
&  B_h S_h I_v - (\gamma+\mu_h) I_h, \\
 &   (S_v+I_v) -B_v I_h S_v  -   D S_v, \\
 &  B_v I_h S_v -  DI_v, \\
 &   (S_v + I_v-1 )\mu_D D + u.
\end{cases}
\end{align}

\subsection{Existence of an optimal control}
We begin by proving the existence of an optimal control in the space of measurable functions $\mathcal U$. 
\begin{prop}
	\label{pr: exist_optimo} The optimal control problem \eqref{eq: OC max} admits a solution on $\mathcal{U}$.
\end{prop}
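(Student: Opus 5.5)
We will establish existence by the Filippov--Cesari theorem, in the form given in Fleming--Rishel (Theorem III.4.1) and commonly used for epidemic control problems. Since \eqref{eq: maximo J} is a maximization, we rewrite it as minimizing $-J(u)=\int_0^T L(X,u)\,dt - aS_h(T)$ with running cost $L(X,u)=cu^2+qu+rI_h$. We then check the hypotheses one by one.

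\emph{Step 1: the admissible set is nonempty.} By Remark \ref{re: soluciones acotadas}, for every $u\in\mathcal U$ the controlled system \eqref{eq: ode_control} has a unique absolutely continuous, nonnegative solution on $[0,T]$. For instance, $u\equiv 0$ is admissible.

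\emph{Step 2: uniform a priori bounds.} Since $S_h+I_h\le 1$, the host variables stay in $[0,1]$. For the vector--predator variables, $\dot N_v=(1-D)N_v\le N_v$ gives $N_v(t)\le N_v(0)e^{T}$. Then $\dot D\le \mu_D N_v(0)e^{T}D+u_{max}$ gives a Gronwall bound on $D$ depending only on $T$, $u_{max}$ and the initial data. Together these show that $f$ is bounded by a linear function of $X$ uniformly in $u$: on a bounded set, $|f(X,u)|\le C_1(1+|X|+|u|)$. Moreover $f$ is $C^1$, hence Lipschitz in $X$ on this bounded invariant region. These facts give the required compactness of the trajectory set: trajectories are equibounded and equi-Lipschitz, so Arzel\`a--Ascoli applies.

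\emph{Step 3: convexity.} The control set $[0,u_{max}]$ is compact and convex. The dynamics are affine in $u$, since $f(X,u)=g(X)+u\,e_5$. The integrand $L$ is convex in $u$ because $c\ge0$ and $q\ge0$; the quadratic term is convex and the linear term is trivially so. Hence for each $X$ the set
\[
\{(f(X,u),\,L(X,u)+\xi):\ u\in[0,u_{max}],\ \xi\ge 0\}
\]
is convex, which is the Cesari convexity condition. In addition, $L$ is bounded below by $0$, or more generally $L\ge C_2|u|^{\beta}-C_3$ with $\beta\ge1$: when $c>0$ this holds with $\beta=2$, and when $c=0,\ q>0$ it holds trivially with $\beta=1$ since $u$ is bounded. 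Finally, the terminal payoff $aS_h(T)$ is continuous and bounded on the reachable set.

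\emph{Step 4: passing to the limit.} With these conditions verified we invoke the theorem. Alternatively, it can be argued directly. Take a maximizing sequence $u_n$. Extract a weak-$L^2$ convergent subsequence $u_n\rightharpoonup u^*\in\mathcal U$; the limit lies in $\mathcal U$ because $\mathcal U$ is convex and closed. By Step 2, the states $X_n$ converge uniformly to some $X^*$. Affine dependence on $u$ lets us pass to the limit in the integral equation, so $X^*=X(u^*)$. Weak lower semicontinuity of $\int cu^2$ (with the linear terms converging) then gives $J(u^*)\ge\limsup J(u_n)$.

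The main obstacle is Step 2. The uncontrolled Lotka--Volterra bounds of Proposition \ref{pr: soluciones globales} rely on conserved orbits, and these are destroyed by the control term $u$. We therefore need the Gronwall estimate on the finite horizon $[0,T]$ instead, and must make sure it is uniform in $u\in\mathcal U$. The bound $u\le u_{max}$ is what makes this uniformity work.
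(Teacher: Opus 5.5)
Your proof is correct and follows essentially the same route as the paper, which verifies the hypotheses (a)--(f) of Clarke's existence theorem (Theorem 23.11 in \cite{Clarke2013}) rather than those of Filippov--Cesari. Both rest on the same ingredients: bounded trajectories, a compact convex control set, dynamics affine in $u$, and a convex nonnegative integrand. Your Gronwall bound on $[0,T]$, uniform in $u\in\mathcal U$, is more careful than the paper's appeal to Remark~\ref{re: soluciones acotadas}, whose underlying uncontrolled argument bounds $(N_v,D)$ via the closed Lotka--Volterra orbits that the control term destroys; your direct-method argument in Step~4 also makes the result self-contained.
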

\begin{proof}
	The proof follows directly from Theorem 23.11 \cite{Clarke2013}.
	We need to check conditions (a)-(f) from this theorem for the functional $-\mathcal{J}$.
	From remark \ref{re: soluciones acotadas} solutions of system \eqref{eq: ode_control} are bounded and therefore item (a) is verified.
	Also, since the control space is compact and convex and the term depending on the state at the final time is continuous, conditions (b), (c), and (f) are fulfilled. 
	From the fact that the integrand for the minimization problem is quadratic with respect to the control when $c>0$ and linear with slope $q> 0$ when $c=0$, it is clearly convex, continuous, and satisfies that 
    $c u^2 + q u+rI_h \ge 0$ for $u\in[0,u_{max}]$, proving item (d). Condition (e) is trivial since initial conditions on the state are given.
	Finally, any $u \in {\mathcal U}$ 
	gives an admissible process for which $-\mathcal{J}$ is finite, completing the hypothesis of Theorem
	23.11.
\end{proof}	

\subsection{Necessary Conditions for an Optimal Control}
\label{sec: necessary conditions}

To derive first-order necessary conditions of optimality for problem \eqref{eq: OC max}, we apply Pontryagin's Maximum Principle. This yields a characterization of optimal controls via the Hamiltonian function and the corresponding adjoint (costate) system.

The Hamiltonian associated to problem \eqref{eq: OC max} is defined as
\begin{equation}\label{HamiltonianM1}
	H(X,u,\vec{p}) = \vec{p} \cdot f(X,u) - \lambda_0 (c u^2 + q u+rI_h),
\end{equation}
where $\vec{p} = (p_1, p_2, p_3, p_4, p_5)$ is the adjoint variable, and $\lambda_0 \geq 0$ is a scalar multiplier. The control $u$ appears quadratically in the cost term, while it affects the dynamics linearly through $f(X,u)$.

Let $(X^*, u^*)$ denote an optimal state-control pair. Then, there exist a scalar $\lambda_0 \geq 0$ and an absolutely continuous function $\vec{p}:[0,T] \to \mathbb{R}^5$, not both identically zero, such that the following conditions are satisfied:

\begin{enumerate}
	\item \textit{Adjoint system.} The components of $\vec{p}$ satisfy the differential equations, for almost every $t \in [0,T]$:
	\begin{align}\nonumber
		\dot{p}_1 & =  (p_1-p_2)B_h I_v + p_1\mu_h\\\nonumber
		\dot{p}_2 & =  p_2 (\gamma+\mu_h)  + (p_3-p_4)B_v S_v+\lambda_0 r.\\\label{pode}
		\dot{p}_3 &=  p_3 (-1 + D) + (p_3- p_4)B_v I_h - p_5  \mu_D D.\\\nonumber
		\dot{p}_4 &= (p_1-p_2) B_h S_h -p_3 + p_4   D - p_5\mu_D  D.\\\nonumber
		\dot{p}_5 &  =p_3  S_v + p_4  I_v - p_5\mu_D  (S_v + I_v) +p_5 \mu_D.
	\end{align}

	\item \textit{Transversality condition.} At the final time $T$, the adjoint variables satisfy:
	\begin{align} \label{pfinal}
		\vec{p}(T) = \lambda_0 (a, 0, 0, 0, 0),
	\end{align}
	which reflects the terminal cost depending only on the first component $S_h(T)$ of the state.
	
	\item \textit{Nontriviality condition.} The pair $(\lambda_0, \vec{p}(t))$ must satisfy
	\begin{align} \label{pmp: nontriviality}
		(\lambda_0, \vec{p}(t)) \neq 0 \quad \text{for all } t \in [0,T],
	\end{align}
	ensuring that the multiplier and the adjoint variable are not simultaneously zero.
	
	\item \textit{Maximality condition.} For almost every $t \in [0,T]$, the optimal control $u^*(t)$ must maximize the Hamiltonian, i.e.,
	\begin{align} \label{pmp: max}
		H(X^*(t), u^*(t), \vec{p}(t)) = \max_{u \in [0, u_{\max}]} H(X^*(t), u, \vec{p}(t)).
	\end{align}

	Substituting the explicit form of $H$, this leads to the inequality
	\begin{equation} \label{pmp: optimality}
		p_5(t) u^*(t) - \lambda_0 (c (u^*(t))^2 + q u^*(t)) \ge p_5(t) u - \lambda_0 (c u^2 + q u) \quad \text{for all } u \in [0, u_{\max}].
	\end{equation}
\end{enumerate}

\begin{lema}\label{le: normal}
The optimal control problem is normal, that is, the multiplier $\lambda_0 \neq 0$.
\end{lema}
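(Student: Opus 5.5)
We argue by contradiction. Suppose $\lambda_0=0$ and derive $\vec p\equiv 0$ on $[0,T]$, which violates the nontriviality condition \eqref{pmp: nontriviality}.

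The first step uses the transversality condition \eqref{pfinal}. If $\lambda_0=0$, it gives $\vec p(T)=0$. The second step uses the structure of the adjoint system \eqref{pode}. With $\lambda_0=0$ the term $\lambda_0 r$ in the equation for $\dot p_2$ vanishes, so \eqref{pode} becomes a \emph{linear homogeneous} system
\[
\dot{\vec p}(t)=A(t)\,\vec p(t).
\]
The entries of $A(t)$ are built from the optimal state $X^*(t)=(S_h,I_h,S_v,I_v,D)$ and the constants $B_h,B_v,\gamma,\mu_h,\mu_D$. By Remark \ref{re: soluciones acotadas}, $X^*$ is bounded on $[0,T]$. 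Since $X^*$ is absolutely continuous, it is also continuous, so $A(t)$ is bounded and measurable (indeed continuous) on $[0,T]$. The third step then invokes uniqueness of solutions to linear ODEs with integrable coefficients (Carathéodory). The zero function solves $\dot{\vec p}=A(t)\vec p$ with terminal value $\vec p(T)=0$, so $\vec p\equiv 0$ on $[0,T]$. Concretely, Gronwall's inequality applied backward in time to $|\vec p(t)|\le\int_t^T\|A(s)\|\,|\vec p(s)|\,ds$ yields $\vec p\equiv0$. Together with $\lambda_0=0$, this gives $(\lambda_0,\vec p(t))=0$ for every $t$, contradicting \eqref{pmp: nontriviality}. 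Hence $\lambda_0\neq 0$.

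Once $\lambda_0>0$ is established, we may normalize $\lambda_0=1$ by rescaling $(\lambda_0,\vec p)$, since all PMP conditions are positively homogeneous in $(\lambda_0,\vec p)$. This normalization is what is used later to characterize $u^*$ from \eqref{pmp: optimality}.

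I do not expect a genuine obstacle. The only point requiring care is confirming that every term of \eqref{pode} is linear in $\vec p$ when $\lambda_0=0$, with no inhomogeneous term left over. Inspecting the five equations shows that $\lambda_0 r$ is the only $\vec p$-independent term. Regularity of the coefficients is ensured by the boundedness of the state from Remark \ref{re: soluciones acotadas}.
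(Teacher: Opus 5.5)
Your proof is correct and follows the paper's argument: with $\lambda_0=0$ the transversality condition gives $\vec p(T)=0$, and uniqueness for the adjoint system, which is now linear homogeneous, forces $\vec p\equiv 0$, contradicting nontriviality. Your extra details make explicit what the paper leaves implicit, namely that $\lambda_0 r$ is the only inhomogeneous term and that uniqueness follows from bounded coefficients via Gronwall.
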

\begin{proof}
Suppose for contradiction that $\lambda_0 = 0$. Then, from \eqref{pfinal} it follows that $\vec{p}(T) = 0$, and uniqueness of the solution to the adjoint system implies $\vec{p}(t) \equiv 0$ on $[0,T]$, contradicting the nontriviality condition \eqref{pmp: nontriviality}. Therefore, the problem is normal, and we may set $\lambda_0 = 1$ without loss of generality.
\end{proof}

In this case, the optimality condition \eqref{pmp: optimality} leads to an explicit expression for the optimal control $u^*$ in terms of $p_5$:
\begin{itemize}
	\item If $c > 0$, then $u^*(t)$ is the projection of $\frac{p_5(t) - q}{2c}$ onto the admissible interval $[0, u_{\max}]$, that is,
	
	\begin{align} \label{eq: u*_c_mayor_0}
		u^*(t)&=\left\{ \begin{matrix}0&\text{ for }& p_5(t)<q \\ \frac{p_5(t)-q}{2 c} &\text{ for } & q< p_5(t)<2 c u_{max}+q \\u_{max} & \text{ for} & p_5(t)>2cu_{max}+q\end{matrix} \right\}  \, = \, \max\left\{0,\min\left\{\frac{p_5(t)-q}{2c},u_{max}\right\}\right\} 
	\end{align}
	
	\item If $c = 0$, then the Hamiltonian is linear in $u$, and the optimal control takes a bang-bang form:
\begin{align} \label{eq: u*_c_igual_0}
	u^*(t) & =
	\left\{ \begin{matrix}
		0 & \text{if } p_5(t) < q, \\
		u_{\max} & \text{if } p_5(t) > q.
\end{matrix} \right. 
	\end{align}
\end{itemize}
This expression highlights how the sign and magnitude of the switching function $p_5(t) - q$ determines the control structure.

\section{Numerical simulations}\label{section 5}
In this section, we illustrate the behavior of the proposed epidemiological model and assess the performance of the optimal control derived in Section \ref{sec: necessary conditions}.
Although in previous sections the SIR model was non-dimensionalized to simplify the system, reduce parameters, and facilitate the analysis of equilibria, stability, and the basic reproduction number 
$R_0$, in this section, for numerical simulations and figures we use the original dimensional system, as it preserves the biological interpretation of variables and provides more readable and meaningful graphical results.

The simulations were carried out with the parameter values listed in Table~\ref{tab:params}.  
These correspond to biologically meaningful ranges for mosquito–human interactions and predator–vector dynamics (\cite{Massad2010,Antunes-aronna,Lotka1978}). 
Since the actual parameter values may vary significantly depending on geographical region, climate, and seasonal conditions, we selected representative general values that capture the qualitative behavior of the system.  Accordingly, we consider final times 
$T$ corresponding to a single seasonal period of the outbreak.
This choice allows us to emphasize the mathematical and dynamical features of the model while preserving biological plausibility.

\begin{table}[ht]
	\centering
	\caption{Parameter values used in the numerical simulations.}
	\label{tab:params}
	\begin{tabular}{lll}
		\hline
		\textbf{Parameter} & \textbf{Description} & \textbf{Value}\\ \hline
		$\mu_h$ & Natural mortality rate of humans & $3.4\times 10^{-5}$ $\text{days}^{-1}$ \\
		$\mu_v$ & Birth rate of vectors & $0.0125$  $\text{days}^{-1}$\\
		$\mu_D$ & Natural mortality rate of predators & $0.15$  $\text{days}^{-1}$\\
		$\gamma$ & Recovery rate in humans & $0.14$ $\text{days}^{-1}$\\
		$\alpha$ & Vector mortality induced by predators & $0.3$ $(\text{predators}\times \text{days})^{-1}$ \\
		$\eta$ & Conversion efficiency & $0.1$  $(\text{vectors}\times \text{days})^{-1}$ \\
		$b$ & Bite rate & $0.7$ $\text{hosts}\times \text{days}^{-1}$\\
		$\beta_h$ & Human transmission coefficient & $0.45$ $\text{vectors}^{-1}$ \\
		$\beta_v$ & Vector transmission coefficient & $0.55$ $\text{hosts}^{-1}$ \\
		$u_{\max}$ & Maximum control & $0.5$ \\ \hline
		$a$ & Weight of $S_h(T)$ in the cost functional & $5$ \\
		$c$ & Quadratic weight of $u^2(t)$ in the cost & $0$  or $ 1$ \\
		$q$ & Linear weight of $u(t)$ in the cost & $1$ \\
		$r$ & Weight of $I_h(t)$ in the cost & $5$ \\
		$d$ & Relaxation parameter in control iteration & $0.1$\\ \hline
	\end{tabular}
\end{table}
The full state system and the adjoint system were solved using the \texttt{$solve\_ivp$} function from the \texttt{SciPy} library \cite{Virtanen2020}, which implements an adaptive Runge–Kutta scheme \cite{DormandPrince1980}.
The optimal control was obtained via the classical \emph{forward backward sweep method} \cite{mcasey2012}, where in each iteration, the state system was integrated forward in time, the adjoint system backward in time, and the control was updated according to the optimality condition
\eqref{eq: u*_c_mayor_0} or \eqref{eq: u*_c_igual_0}.
The relaxation factor $d$ was used to stabilize the iterative update
$$
u_{k+1}(t) = d\,u_k(t) + (1-d)\,u^*(t).
$$
For a detailed discussion on the role and choice of the relaxation factor $d$, we refer the reader to \cite{Sharp2021}.
The process was initialized with a $u_0=0.1$ and repeated until the control is stabilized.
The performance index was computed as
\begin{align} \label{eq: functional 2}
J(u) = \int_0^T \!\!\big(c\,u^2(t) + q\,u(t) + r\,I_h(t)\big)\,dt - a\,S_h(T),
\end{align}
using numerical quadrature (\texttt{$scipy.integrate.quad$}).

As the control is directly related to the final time, we will show how the control behaves when different times are considered. In both simulation the initial conditions are $S_h(0)=9$, $I_h(0)=1$, $S_v(0)=9$, $I_v(0)=1$ and $D(0)=0.1$. This values were chosen to ensure that all variables are in the same scale. In Figures 1-3, we consider the representative parameters given in Table \ref{tab:params}.

Figures \ref{fig:T30final} and \ref{fig:T120final} consider the quadratic functional \eqref{eq: functional 2} with $c=1$ showing a control in agreement with \eqref{eq: u*_c_mayor_0}. Also they illustrate how the time horizon of the control problem shapes the qualitative outcome of the intervention.
For short horizons, the optimal strategy primarily acts as a mitigation tool, reducing the impact of the epidemic but without fully stabilizing the underlying ecological dynamics.
In contrast, when the control horizon covers the entire epidemic season, the intervention leads to sustained suppression of the disease and allows seasonal effects to dominate the long-term dynamics of the vector population.
On the other hand, Figure \ref{fig:T120c0final} considers the linear functional $J$ with $c=0$ showing, in this case, a bang-bang control as \eqref{eq: u*_c_igual_0}.

\begin{figure}[ht!]
	\centering
	\includegraphics[width=0.8\linewidth]{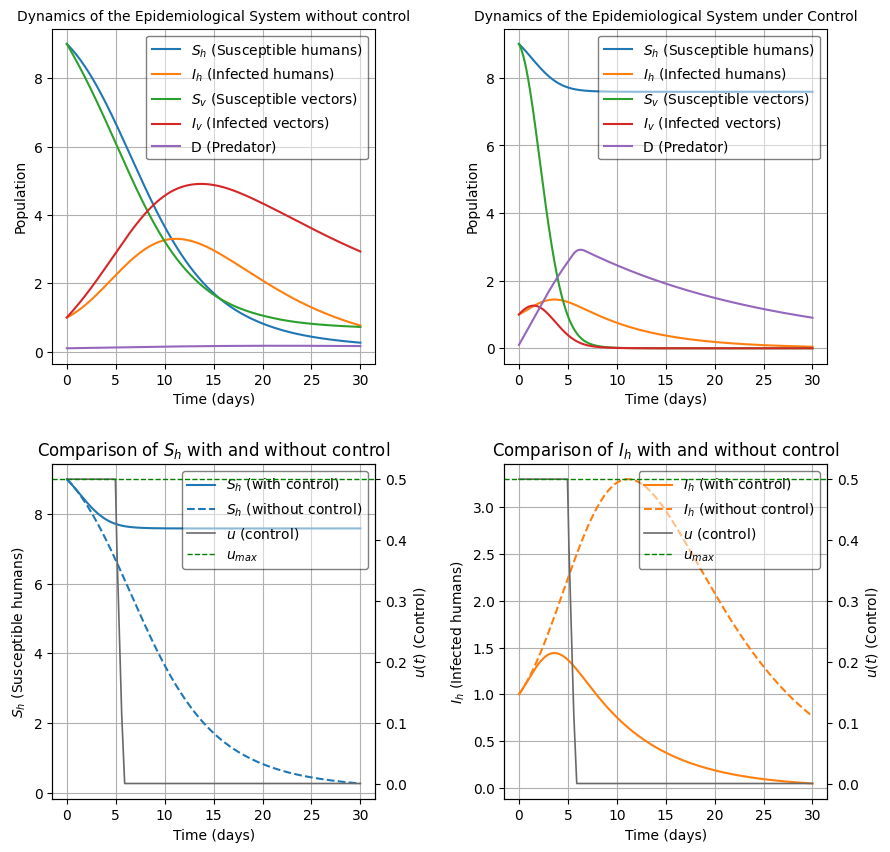}
	\caption{ \small 
Dynamics of the controlled and uncontrolled system for $c=1$ and a short time horizon ($T=30$).
The optimal release of predators substantially mitigates the epidemic by reducing the initial peak and the cumulative number of infected humans.
The control acts mainly during the early phase of the outbreak, weakening transmission through a rapid reduction of the vector population.
However, due to the limited control horizon, the underlying vector--predator oscillations are not fully suppressed, which may allow for a resurgence of the epidemic outside the controlled time window.
}
\label{fig:T30final}
\end{figure}

\begin{figure}[ht!]
	\centering
	\includegraphics[width=0.8\linewidth]{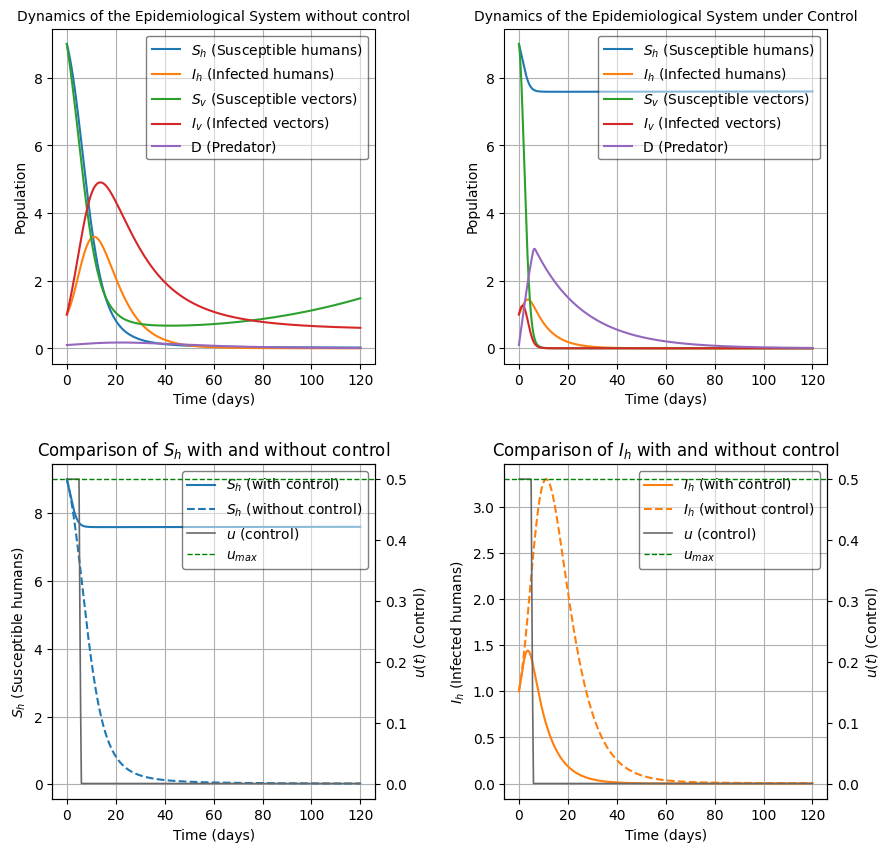}
	\caption{ \small 
Dynamics of the controlled and uncontrolled system for $c=1$ and a long time horizon ($T=120$).
Using the same parameters and cost weights as in Figure~1, the optimal control maintains the number of infected humans at low levels throughout the entire epidemic period.
The extended intervention prevents secondary growth of the vector population during the epidemic phase.
After the epidemic season, seasonal effects naturally drive the mosquito population to extinction, so that no further control is required.}
\label{fig:T120final}
\end{figure}

\begin{figure}[ht!]
	\centering
	\includegraphics[width=0.8\linewidth]{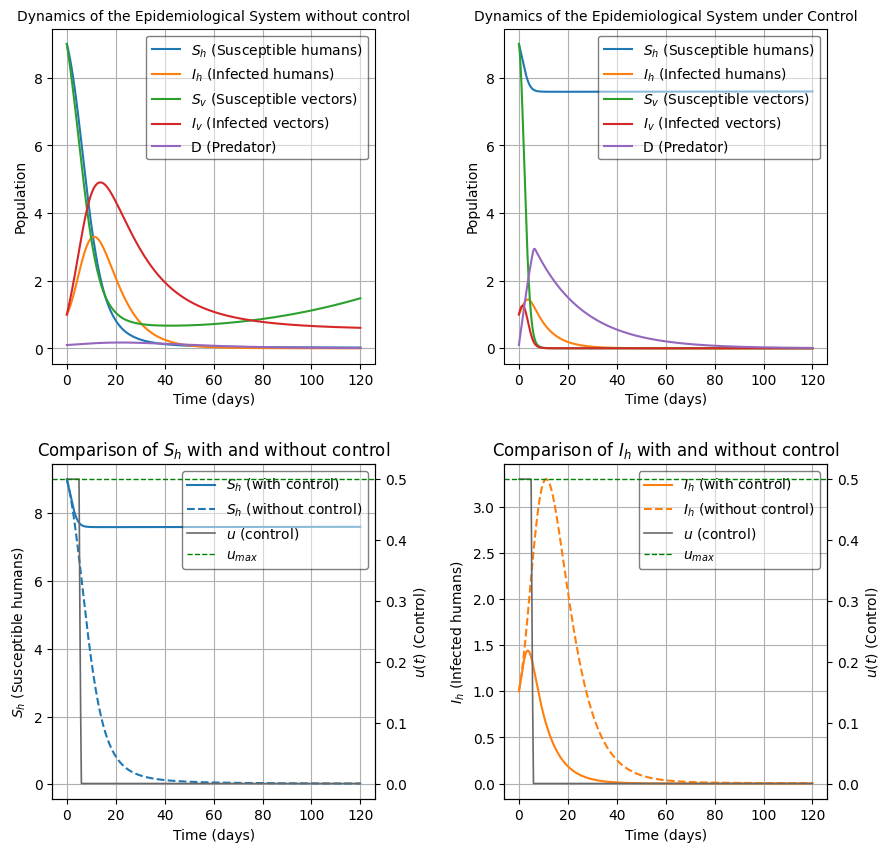}
	\caption{ \small 
Controlled and uncontrolled dynamics for $c=0$, showing a bang--bang optimal control, in agreement with the theoretical predictions of the Pontryagin Maximum Principle (\ref{eq: u*_c_igual_0}).
 The final time is $T=120$ days.}
	\label{fig:T120c0final}
\end{figure}

 \section{Conclusions} \label{section 6}
 
 In this work, we have presented and analyzed a deterministic model that couples the epidemiological dynamics of a human population (SIR) with the ecological dynamics of a vector-predator system governed by Lotka--Volterra interactions. The Lotka--Volterra subsystem in our model exhibits intrinsic periodic solutions, which introduces specific challenges regarding the stability analysis.  
 
 The dynamical analysis revealed that the basic reproduction number, $R_0$, serves as a fundamental threshold for the persistence of the disease. Consistent with the classical results for dengue transmission, the non-trivial disease-free equilibrium is unstable when $R_0 > 1$ and the infective compartments goes to zero locally when $R_0<1$. Moreover, due to the oscillatory nature of the predator-vector interaction, we established a stronger stability condition, the ecological reproduction number $R_0(k_0)$, which depends on the amplitude of the predator-prey cycles. 
 This finding are related to the results of \cite{Zhou2014}, who demonstrated that predation intensity is crucial for disease eradication, by showing that in a Lotka--Volterra framework, the ratio of maximum vector density to minimum predator density within the invariant region determines the feasibility of natural disease control. When this natural disease control is not achieved, that is, the vectors population outnumber predators, an external control is necessary to regulate the outbreak.
 
 Regarding the optimal control problem, we successfully formulated a strategy to maximize the number of susceptible hosts at the final time while minimizing both the cumulative number of infected individuals and the costs associated with breeding predators. Through the application of Pontryagin’s Maximum Principle, we characterized the optimal control laws for quadratic and linear cost structures. The numerical simulations confirm that introducing $u(t)$, captive-bred predators, effectively reduces the infected host population. 
 
 The numerical simulations provide a visual validation of the theoretical optimal control analysis, confirming the practical efficacy of the proposed intervention. By implementing a forward-backward sweep method to solve the coupled state and adjoint systems, the results illustrate how the introduction of an external control $u(t)$—representing the release of captive-bred predators—successfully mitigates the epidemic peak and reduces the cumulative number of infected hosts. The analysis of the control profiles reveals a strong dependency on the chosen time horizon $T$, demonstrating that the optimal strategy dynamically adjusts to the intrinsic oscillations of the predator-vector subsystem to minimize breeding costs while maximizing the final healthy population. 
 
 Together, Figures \ref{fig:T30final} and \ref{fig:T120final} highlight the role of the control horizon in shaping the outcome of the intervention.
While short-term control is effective in mitigating the epidemic peak, it may leave the system vulnerable to rebounds driven by ecological oscillations.
In contrast, long-term control stabilizes the dynamics during the epidemic period and allows seasonal effects to eliminate the vector population naturally, resulting in a more robust and sustained suppression of the disease.

Finally, the numerical experiments with $c=0$ confirm the theoretical characterization of the optimal control.
In the absence of a quadratic penalization on the control effort, the optimal strategy exhibits a bang--bang structure, switching between the extreme admissible values.
This behavior is fully consistent with the predictions of the Pontryagin Maximum Principle and provides an additional numerical validation of the analytical results.

 In summary, this study demonstrates that integrating Lotka-Volterra predation into an SIR-SI model offers a viable biological control mechanism. While natural predation alone may lead to periodic disease prevalence due to the cyclic nature of the vector population, the application of optimal control theory allows for the design of efficient release strategies. These strategies can force the system toward a disease-free state or significantly mitigate the epidemic burden, providing a theoretical foundation for the economic and ecological management of vector-borne diseases.
 
\section*{Acknowledgements}
This work was partially supported by PIP No. 11220220100124CO.

\bibliographystyle{plain}
\bibliography{References.bib}

\end{document}